\theoremstyle{plain}
 \newtheorem{theorem}{Theorem}[section]
\theoremstyle{Definition}
 \newtheorem{exm}{Example}[section]
 \newtheorem{dfn}{Definition}[section]
\theoremstyle{remark}
 \newtheorem{conj}{Conjecture}[section]
 \numberwithin{equation}{section}
\renewcommand{\geq}{\geqslant}
\renewcommand{\setminus}{\smallsetminus}
\title[Fast Escaping Set of Transcendental Semigroup]{Fast Escaping Set of Transcendental Semigroup}
\subjclass[2010]{37F10, 30D05}
\keywords{Escaping set, Fast escaping set, levels etc. }
\author[B. H. Subedi]{\bfseries  Bishnu Hari Subedi}
\address{ 
Central Department of Mathematics \\ 
Institute of Science and Technology   \\ 
Tribhuvan University   \\ 
Kirtipur, Kathmandu\\
Nepal}
\email{subedi.abs@gmail.com / subedi\_bh@cdmathtu.edu.np }
\author[A. Singh]{Ajaya Singh}
\address{Central Department of Mathematics, Institute of Science and Technology, Tribhuvan University, Kirtipur, Kathmandu, Nepal }
\email{singh.ajaya1@gmail.com / singh\_a@cdmathtu.edu.np} 
\thanks{This research work of first author is supported by PhD faculty fellowship of University Grants Commission, Nepal. } 
\begin{document}

{\begin{flushleft}\baselineskip9pt\scriptsize
MANUSCRIPT
\end{flushleft}}
\vspace{18mm} \setcounter{page}{1} \thispagestyle{empty}

\begin{abstract}
In this paper, we study fast escaping set of transcendental semigroup. We discuss the structure and properties of fast escaping set of transcendental semigroup. We also see how far the classical theory of fast escaping set of transcendental entire function applies to general settings of transcendental semigroups and what new phenomena can occur. 
\end{abstract}

\maketitle

\section{Introduction}

The principal aim of this paper is to study fast escaping set not for iterates of single transcendental entire maps of  complex plane $ \mathbb{C} $  but for the composite of the family $\mathscr{F}  $ of such maps. Let $ \mathscr{F} $ be a space of transcendental entire maps on $ \mathbb{C} $. For any map $ f \in \mathscr{F} $,  $ \mathbb{C} $ is naturally partitioned  into two subsets: the set of normality and its complement. The \textit{set of normality} or \textit{Fatou set} $ F(f) $ of $ f \in \mathscr{F} $ is the largest open set on which the iterates $ f^{n} = f \circ f\circ \ldots \circ f$ (n-fold composition of $ f $ with itself, $ n \in \mathbb{N} $) is a normal family.  The complement of Fatou set in $ \mathbb{C} $ is the \textit{Julia set} $ J(f) $. A maximally connected subset of the Fatou set $ F(f) $ is a \textit{Fatou component}.  The \textit{escaping set} of any $ f \in \mathscr{F} $ is the set
$$
I(f) = \{z\in \mathbb{C}:f^n(z)\rightarrow \infty \textrm{ as } n\rightarrow \infty \}
$$
and any point $ z \in I(S) $ is called \textit{escaping point}. For transcendental entire function $f$, the escaping set $I(f)$ was first studied by A. Eremenko \cite{ere}. He showed that 
$I(f)\not= \emptyset$;  $ J(f) =\partial I(f) $;
 $I(f)\cap J(f)\not = \emptyset$; and 
$\overline{I(f)}$ has no bounded component. 
 By motivating from this last statement, he posed a conjecture: 
\begin{conj} \label{ec}
  Every component of $ I(f) $ unbounded.
 \end{conj}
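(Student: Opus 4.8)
The plan is to approach the conjecture through the fast escaping set $A(f)\subseteq I(f)$, the object at the heart of this paper. Recall that $A(f)$ is assembled from its \emph{levels}: fixing $R>0$ large enough that $M(r,f)>r$ for $r\geq R$, one sets
\[
A(f)=\bigcup_{n\geq 0} f^{-n}\bigl(A_R(f)\bigr),\qquad A_R(f)=\{\,z:|f^m(z)|\geq M^m(R)\ \text{for all } m\geq 0\,\},
\]
where $M(r)=M(r,f)$ is the maximum modulus and $M^m$ its $m$-th iterate. Since $A(f)\subseteq I(f)$, it would suffice to prove two things: (i) every component of $A(f)$ is unbounded; and (ii) every component of $I(f)$ contains a fast escaping point. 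Granting (i) and (ii) together, a bounded component of $I(f)$ would contain a point whose $A(f)$-component is unbounded, forcing the $I(f)$-component to be unbounded as well, which is a contradiction.

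For step (i) I would work level by level and argue by contradiction. Each $A_R(f)$ is closed, and the decisive structural feature is the covering property: once $R$ is large, $|z|\geq M^m(R)$ forces $|f(z)|\geq M^{m+1}(R)$, so $f$ carries each level into the next. Suppose some component of $A_R(f)$ were bounded. Using that a component of a closed planar set can be surrounded by a closed curve lying in the complement, I would enclose this component by a curve $\gamma$ of points lying outside $A_R(f)$ and then track the iterates $f^n$ on the region it bounds; the rapid growth $M^n(R)\to\infty$ together with the maximum principle would contradict the defining inequalities holding on the interior. The unboundedness then propagates to all of $A(f)$, since preimages under the entire map $f$ of unbounded closed connected sets again have unbounded components.

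The genuine difficulty is step (ii), and this is where the proposal meets the real content of the problem. A bounded component $C$ of $I(f)$ would necessarily satisfy $C\cap A(f)=\emptyset$, so it would consist entirely of points that escape but do so slowly; for such points there is no maximum-modulus lower bound to exploit and the orbits may wander arbitrarily, so the mechanism of step (i) has no purchase. The only unconditional input available is Eremenko's theorem that $\overline{I(f)}$ has no bounded component, which still permits a bounded component of $I(f)$ abutting $J(f)\setminus I(f)$. I therefore expect that a full proof in this generality lies beyond the fast escaping machinery alone, consistent with the status of the statement as a central open conjecture, while the same machinery does settle the conclusion unconditionally in the important case where $A(f)$ is a \emph{spider's web} (then $I(f)$ is connected and unbounded outright) and conditionally whenever (ii) can be verified.
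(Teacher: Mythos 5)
You have diagnosed your own proposal correctly, and your self-assessment matches the paper's treatment of the statement: the paper records it precisely as a \emph{conjecture} --- an open problem --- and contains no proof of it, so there is nothing for your argument to be compared against except the partial answer the paper cites. That partial answer is exactly your step (i): Rippon and Stallard (cited in the paper as \cite[Theorem 1]{rip1} and \cite[Theorem 1.1]{rip4}) prove that every component of $A(f)$ is closed and unbounded, whence $I(f)\supset A(f)\neq\emptyset$ has \emph{at least one} unbounded component; the paper's sentence that the conjecture ``has been proved by using the fast escaping set'' is loose wording for this partial answer ``in normal form,'' not a claim that the conjecture itself is settled. Your sketch of step (i) --- closedness of the levels, the covering property $|z|\geq M^m(R)\Rightarrow|f(z)|\geq M^{m+1}(R)$, and a surrounding-curve argument against a bounded component --- is a fair outline of the actual Rippon--Stallard proof. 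The genuine gap is where you placed it: step (ii), that every component of $I(f)$ meets $A(f)$, is simply not available. Slowly escaping orbits (for instance, orbits in a Baker domain, which may escape at polynomial speed) satisfy no maximum-modulus lower bound, so the level machinery gives no grip on their components, and Eremenko's theorem that $\overline{I(f)}$ has no bounded component does not exclude a bounded component of $I(f)$ itself.

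Two calibrating remarks. First, your conditional claims are accurate: when $A_R(f)$ is a spider's web, $I(f)$ is connected and unbounded, so the conclusion holds unconditionally for that class of functions. Second, the gap in step (ii) is not merely unfilled but unfillable in general: after this paper was written, Mart\'{i}-Pete, Rempe and Waterman constructed a transcendental entire function whose escaping set has a bounded component, so the conjecture is false as stated, and any argument of the shape you propose must break at exactly the point you identified. In short: your proposal proves nothing the paper does not already cite, but it is honest about this, correctly reproduces the known partial result, and locates the obstruction precisely where it lies.
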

This conjecture is considered  as an important open problem of transcendental dynamics and nowadays it is famous as \textit{Eremenko's conjecture}\index{Eremenko's conjecture}. 
The Eremenko's conjecture \ref{ec} in general case has been proved by using the fast escaping set $A(f)$, which consists of points whose iterates tends to infinity as fast as possible. This set is a subset of escaping set and it was introduced  first time by Bergweiler and Hinkkanen \cite{ber} 
and defined in the following form by Rippon and Stallard \cite{rip4}. For a transcendental entire function $f$, the \textit{fast escaping set}\index{fast escaping ! set} is a set of the form:
            \[A(f) = \{z\in \mathbb{C}: \exists L\in \mathbb{N} \ \text{such that}\ \ |f^{n+L}(z)|\geq M^n(R)\;\ \text{for}\;\  n\in \mathbb{N}\}\]
where $M(r) = \max_{|z|=r}|f(z)|,  r>0$ and $M^n(r)$ denotes nth iteration of $M(r)$ with respect to $r$.  $R>0$ can be taken any value such that $M(r)>r$ for $r\geq{R}$. 

 Recall that the set $ C(f) = \{z\in \mathbb{C} : f^{\prime}(z) = 0 \}$ is the set of \textit{critical points} of the transcendental entire function $ f $ and  the set $CV(f) = \{w\in \mathbb{C}: w = f(z)\;\ \text{such that}\;\ f^{\prime}(z) = 0\} $  is called the set of \textit{critical values}.  The set 
$AV(f)$ consisting of all  $w\in \mathbb{C}$ such that there exists a curve (asymptotic path) $\Gamma:[0, \infty) \to \mathbb{C}$ so that $\Gamma(t)\to\infty$ and $f(\Gamma(t))\to w$ as $t\to\infty$ is called the set of \textit{asymptotic values} of $ f $ and the set
$SV(f) =  \overline{(CV(f)\cup AV(f))}$
is called the \textit{singular values} of $ f $.  
If $SV(f)$ has only finitely many elements, then $f$ is said to be of \textit{finite type}. If $SV(f)$ is a bounded set, then $f$ is said to be of \textit{bounded type}.                                                                                                                             
The sets
$$\mathscr{S} = \{f:  f\;\  \textrm{is of finite type}\} 
\;\;  \text{and}\; \;                                                                                                                                                                                                                                                                                                                                                                                                                                                                                                                                                                                                                                                                                                                                                                                                                                                                                                                                                                                                                                                                                                                                                                                                                                                                                                                                                                                                                                                                                                                                                                                                                                                                                                                                                                                                                                                                                                                                                                                                                                                                                                                                                                                                                                                                                                                                                                                                                                                                                                                                                                                                                                                                                                                                                                                                                                                                                                                                                                                                                                                                                                                     
\mathscr{B} = \{f: f\;\  \textrm{is of bounded type}\}
$$
are respectively called \textit{Speiser class} and \textit{Eremenko-Lyubich class}. 

The main concern of such a transcendental iteration theory is to describe the nature of the components of Fatou set and the structure and properties of the Julia set, escaping set and fast escaping set. We use monograph: \textit{dynamics of transcendental entire functions} \cite{hou} and book: \textit{holomorphic dynamics} \cite{mor} for basic facts concerning the Fatou set, Julia set and escaping set of a transcendental entire function. We use  \cite{rip1, rip4, six} for facts and results concerning the fast escaping set of a transcendental entire function.    

Our particular interest is to study of the dynamics of the families that are semigroups  generated by the class $\mathscr{F}$ of transcendental entire maps.  For a collection $\mathscr{F} = \{f_{\alpha}\}_{\alpha \in \Delta} $ of such maps, let $S =\langle f_{\alpha} \rangle $ 
be a \textit{transcendental semigroup} generated by them.  The index set $ \Delta $ to which $ \alpha $  belongs is allowed to be infinite in general unless otherwise stated. 
Here, each $f \in S$ is a transcendental entire function and $S$ is closed under functional composition. Thus, $f \in S$ is constructed through the composition of finite number of functions $f_{\alpha_k},\;  (k=1, 2, 3,\ldots, m) $. That is, $f =f_{\alpha_1}\circ f_{\alpha_2}\circ f_{\alpha_3}\circ \cdots\circ f_{\alpha_m}$. 
A semigroup generated by finitely many transcendental entire functions $f_{i}, (i = 1, 2, \ldots,  n) $  is called \textit{finitely generated  transcendental semigroup}. We write $S= \langle f_{1},f_{2},\ldots,f_{n} \rangle$.
 If $S$ is generated by only one transcendental entire function $f$, then $S$ is \textit{cyclic transcendental semigroup}. We write $S = \langle f\rangle$. In this case, each $g \in S$ can be written as $g = f^n$, where $f^n$ is the nth iterates of $f$ with itself. Note that in our study of  semigroup dynamics, we say $S = \langle f\rangle$  a \textit{trivial semigroup}. The transcendental semigroup $S$ is \textit{abelian} if  $f_i\circ f_j =f_j\circ f_i$  for all generators $f_{i}$ and $f_{j}$  of $ S $. The transcendental semigroup $ S $ is \textit{bounded type (or finite type)} if each of its generators $ f_{i} $ is bounded type (or finite type).
  
The family $\mathscr{F}$  of complex analytic maps forms a \textit{normal family} in a domain $ D $ if given any composition sequence $ (f_{\alpha}) $ generated by the member of  $ \mathscr{F} $,  there is a subsequence $( f_{\alpha_{k}}) $ which is uniformly convergent or divergent on all compact subsets of $D$. If there is a neighborhood $ U $ of the point $ z\in\mathbb{C} $ such that $\mathscr{F} $ is normal family in $U$, then we say $ \mathscr{F} $ is normal at $ z $. If  $\mathscr{F}$ is a family of members from the semigroup $ S $, then we simply say that $ S $ is normal in the neighborhood of $ z $ or $ S $ is normal at $ z $.

Let  $ f $ be a transcendental entire map. We say that  $ f $ \textit{iteratively divergent} at $ z \in \mathbb{C} $ if $  f^n(z)\rightarrow \infty \; \textrm{as} \; n \rightarrow \infty$.  Semigroup $ S $ is \textit{iteratively divergent} at $ z $ if $f^n(z)\rightarrow \infty \; \textrm{as} \; n \rightarrow \infty$ for all $ f \in S $. Otherwise, a function $ f  $ and semigroup $ S $  are said to be \textit{iteratively bounded} at $ z $. Note that in a semigroup $S = \langle f_{\alpha} \rangle$ if $ f \in S $ then for all $ m \in \mathbb{N}, f^{m} \in S$. So, $f^{m} =f_{\alpha_1}\circ f_{\alpha_2}\circ f_{\alpha_3}\circ \cdots\circ f_{\alpha_p}$ for some $ p\in \mathbb{N} $. In this sense,  any $ f \in S $ is iteratively divergent at any $ z \in \mathbb{C} $ always means that there is a sequence $ (g_{n})_{n \in \mathbb{N}} $ in $ S $ representing  $ g_{1} = f, g_{2}= f^{2}, \ldots , g_{n} = f^{n}, \ldots $ such that $ g_{n}(z) \to \infty $ as $ n \to \infty $ or there a sequence in $ S $ which contains $ (g_{n})_{n \in \mathbb{N}} $ as a subsequence such that $ g_{n}(z) \to \infty $ as $ n \to \infty $. More generally, semigroup $ S $ is iteratively divergent at any $ z \in \mathbb{C} $ always means that every $ f \in S $ is iteratively divergent at $ z $. That is, every sequence in $ S $  has a subsequence which diverges to infinity at $ z $.   

Based  the Fatou-Julia-Eremenko theory,  the Fatou set, Julia set and escaping set in the settings of transcendental  semigroup are defined as follows.
\begin{dfn}[\textbf{Fatou set, Julia set and escaping set}]\label{2ab} 
\textit{Fatou set} of the holomorphic semigroup $S$ is defined by
  $$
  F (S) = \{z \in \mathbb{C}: S\;\ \textrm{is normal in a neighborhood of}\;\ z\}
  $$
and the \textit{Julia set} $J(S) $ of $S$ is the compliment of $ F(S) $. If $ S $ is a transcendental semigroup, the \textit{escaping set} of $S$ is defined by 
$$
I(S)  = \{z \in \mathbb{C}: S \;  \text{is iteratively divergent at} \;z \}.
$$
We call each point of the set $  I(S) $ by \textit{escaping point}.        
\end{dfn} 
It is obvious that $F(S)$ is the largest open subset of $\mathbb{C}$ on which the family $\mathscr{F} $ in $S$ (or semigroup $ S $ itself) is normal. Hence its compliment $J(S)$ is a smallest closed set for any  transcendental semigroup $S$. Whereas the escaping set $ I(S) $ is neither an open nor a closed set (if it is non-empty) for any transcendental semigroup $S$. Any maximally connected subset $ U $ of the Fatou set $ F(S) $ is called a \textit{Fatou component}.  
If $S = \langle f\rangle$, then $F(S), J(S)$ and $I(S)$ are respectively the Fatou set, Julia set and escaping set in classical complex dynamics. In this situation we simply write: $F(f), J(f)$ and $I(f)$. 

In our study, classical transcendental dynamics  refers the iteration theory of single transcendental map and transcendental semigroup dynamics refers the dynamical theory generated by a set of transcendental entire maps. In transcendental semigroup dynamics, algebraic structure of semigroup naturally attached to the dynamics and hence the situation is largely complicated. The principal aim  of this paper is to see how far classical transcendental dynamics applies to transcendental semigroup dynamics and what new phenomena appear in transcendental semigroup settings. 
  
The fundamental contrast between classical transcendental dynamics and semigroup dynamics appears by different algebraic structure of corresponding semigroups. In fact, non-trivial semigroup  need not be, and most often will not be abelian. However, trivial semigroup is cyclic and therefore abelian. As we discussed before, classical complex dynamics is a dynamical study of trivial (cyclic)  semigroup whereas semigroup dynamics is a dynamical study of non-trivial transcendental semigroup.

The main motivation of this paper comes from seminal work of Hinkkanen and Martin \cite{hin} on the dynamics of rational semigroup and the extension study of K. K. Poon \cite{poo} to the dynamics of transcendental semigroup. Both of them naturally generalized classical complex dynamics to the dynamics of the sequence of different functions by means of composition. Another motivation of studying escaping set of transcendental semigroup comes from the work of Dinesh Kumar and Sanjay Kumar \cite{kum2, kum1} where they defined escaping set and discussed how far escaping set of classical transcendental dynamics can be generalized to semigroup dynamics. In parallel, we also studied structure and propperties of Fatou set, Julia set and escaping under semigroup dynamics in \cite{sub1, sub2, sub3, sub8, sub7, sub4, sub5, sub6}. From these attempts, we again more motivate to study fast escaping set of transcendental semigroup. 

In this paper, we introduce fast escaping set in transcendental semigroup settings which is a main concern of our study. Note that the fast escaping set $A(f)$ in classical transcendental dynamics introduced first time by Bergweiler and Hinkkanen \cite{ber} and studied in more depth by Rippon and Stallard \cite{rip4}. Indeed,  it is a set consisting of points whose iterates tends to infinity as fast as possible and now plays a key role in classical transcendental dynamics. The set $A(f)$ has some properties exactly similar to those of $I(f)$.  For example $A(f)\not = \emptyset$, in fact, it is infinite set ({\cite[Lemma 2]{ber}}),  $\partial A(f) = J(f)$ ({\cite[Lemma 3]{ber}}) and( {\cite [Theorem 5.1 (b)]{rip4}}), $A(f) \cap J(f)\not = \emptyset $ ({\cite[Lemma 3]{ber}}) and({\cite[Theorem 5.1(a)]{rip4}}), The set $ A(f) $  is completely invariant under $ f $ ({\cite [Theorem 2.2 (a)]{rip4}}) and the set $ A(f) $ does not depend on the choice of $ R> 0 $   ({\cite [Theorem 2.2 (b)]{rip4}}).  However,  in {\cite [Theorem 1]{rip1}} and {\cite [Theorem 1.1]{rip4}},  it is shown that all components of $A(f)$ are unbounded and since $ A(f)\subset I(f) $ provides a partial answer to Eremenko's conjecture in normal form. If $U$ is a Fatou component of $f$ in $A(f)$, then its boundary is also in $A(f)$, that is,  $\overline{U}\subset A(f)$ ({\cite [Theorem 2]{rip1}}) and ({\cite[Theorem 1.2]{rip4}}). These are  much stronger properties of $A(f)$  than those of escaping set $I(f)$. 

\section{Fast Escaping Set of Transcendental Semigroup}

There is no equivalent formulation of fast escaping set in semigroup settings. We have started to define fast escaping set and try to formulate some other related terms and results. Note that it is our seminal work on the study of fast escaping set in transcendental semigroup settings. 

 Let  $S$ be a transcendental semigroup. Let us define a set of the form
\begin{equation}\label{fe1}
A_R (S) =  \{z\in \mathbb{C}: |f^n (z)|\geq M^n(R) \;\ \textrm{for all}\;\  f \in S \;\  \textrm{and}\;\ n\in \mathbb{N} \}
\end{equation}
where $M(r) =   \max_{|z|=r}|f(z)|$, with $r\geq R > 0$ and  $M^n(r)$ denotes the nth iterates of $M(r)$ with itself. $ R > 0 $  can be taken any value such that $ M(r) > r $ for $ r\geq R$. If $r$ is sufficiently large then $M^n(r)\rightarrow \infty $ as $n\rightarrow \infty $. The set $A_R (S)  $ is well defined in semigroup $ S $ because for any $ f \in S $,  $ f^{n} \in S $ for all $ n \in \mathbb{N} $. From the condition $ |f^n (z)|\geq M^n(R) \;\ \textrm{for all}\;\  f \in S \;\  \textrm{and}\;\ n\in \mathbb{N} $ of the set $A_R (S) $, we can also say that a point $ z \in \mathbb{C} $ is in $ A_R (S) $  if every sequence $ (g_{n})_{n \in \mathbb{N}} $  in $ S $ has a subsequence  $ (g_{n_{k}})_{n_{k} \in \mathbb{N}} $ 
which increases without bound at least as fast as the n-iterates of the maximum modulus of each $ g_{n_{k}} $: $M(r) =   \max_{|z|=r}|g_{n_{k}}(z)|$. Where $ g_{n_{k}}= f^{n} $ such that $ |f^n (z)|\geq M^n(R) $. 

\begin{dfn}[\textbf{Fast escaping set}] \label{fe2} 
The fast escaping set $A(S)$\index{fast escaping ! set} of a transcendental semigroup $ S $ consists the set $A_R(S)$ and all its pre-images. In other words, fast escaping set is the set of the form
\begin{equation}\label{fe3}
A(S) = \bigcup_{n\geq0}f^{-n}(A_R(S))
\end{equation}
where $f^{-n}(A_{R}(S)) = \{z \in \mathbb{C}: f^{n}(z) \in A_{R}(S) \} $ for all $ f\in S $ and $ n \in \mathbb{N} $.
\end{dfn}
We can do certain stratification of fast escaping set which helps to make it more visible  and provides a significant new understanding of the structure and properties of this set.  We can write fast escaping set as a countable union of all its labels  as we define below. 
\begin{dfn}[\textbf{L th label of fast escaping set}]\label{fe9}
Let $ A(S) $ be a fast escaping set of transcendental semigroup $ S $. For $ L\in \mathbb{Z} $, then the set of the form
\begin{multline}\label{fe6}
A_R^L (S)   = \{z\in \mathbb{C}: |f^n (z)|\geq M^{n+L}(R)\;  \textrm{for all}\;\  f \in S, \\ n\in \mathbb{N}\;  \text{and}\;\ n + L \geq 0 \}
\end{multline}
is called \textit{Lth level}\index{Lth level of fast escaping set} of fast escaping set $ A(S) $. 
\end{dfn}

Note that the set $ A_{R}(S) $  defined above in  \ref{fe1} is the  \textit{0th level}  of fast escaping set $ A(S) $. Since $ M^{n+1}(R) > M^{n}(R) $ for all $ n \geq 0 $, so from \ref{fe6}, we get following chain of relation 

\begin{multline}\label{fe7}
\ldots \subset A_R^L (S) \subset A_R^{L-1} (S)\subset  \ldots \subset A_R^1 (S)\subset A_R (S)\subset  \\ A_R^{-1} (S)\subset A_R^{-2}(S)\subset \ldots\subset  A_{R}^{-(L-1)}(S)\subset A_{R}^{-L}(S) \subset \ldots
\end{multline}
From \ref{fe3} and \ref{fe7}, the fast escaping set  can also be written as an expanding union of its labels. 
\begin{equation}\label{fe8}
A(S) = \bigcup_{L\in \mathbb{N}}A_R^ {-L}(S)
\end{equation} 
Again from the definition \ref{fe2}, that is,  from \ref{fe3}, if any $ z_{0} \in A(S) $, then $ z_{0} \in f^{-n}(A_{R}(S)) $ for some $ n \geq 0 $. It gives $ f^{n}(z_{0}) \in A_{R}(S) $ for all $ f \in S $. From \ref{fe2}, there is $ L \in \mathbb{N} $ such that $ |f^{L}(f^{n}(z_{0}))| =|f^{n + L}(z_{0})| \geq M^{n}(R) $. With this clause, the fast escaping set of transcendental semigroup $ S $ can now be written as
 
\begin{multline}\label{fe5}
A(S) = \{z\in \mathbb{C}: \text{there exists}\;\ L\in \mathbb{N}\;\ \text{such that}\\  |f^{n+ L} (z)|\geq M^{n}(R) \;  \text{for all}\;\ f\in S, \;\text{and}\;\ n\in \mathbb{N}\}.
\end{multline}
We can use any one of the form \ref{fe3} or \ref{fe8} or \ref{fe5} as a definition of fast escaping set in our forth coming study.  Note that by the definition (\ref{fe3} or \ref{fe8} or \ref{fe5}), fast escaping set $ A(S) $ of any transcendental semigroup $ S $ is a subset of escaping set $ I(S) $. Since from {\cite[Theorem 1.2 (3)]{sub1}}, we can say that $ I(S) $ may be empty. For any transcendental semigroup $ S $, if $ I(S) =\emptyset $, then we must have $ A(S)= \emptyset $.  It is not known whether there is a transcendental semigroup $ S $ such that $ I(S)\neq \emptyset $ but $ A(S)= \emptyset $. Note that in classical transcendental dynamics, both of these sets are non-empty. 
\begin{exm}
Let $ S $ be a transcendental semigroup generated by functions $ f(z) =e^{z} $ and $ g(z) =e^{-z} $. Since $ h = g \circ f^{n}  \in S$ is iteratively bounded at any $ z \in \mathbb{C} $. So,  $ I(S) =\emptyset $ and $ A(S) =\emptyset $. 
\end{exm}
Like escaping set $ I(S) $, fast escaping set $ A(S) $ is also neither open nor closed set if it is non-empty. Similar to the result {\cite[Theorem 1.2 (3)]{sub1}} of escaping set, the following result is also clear from the definition of fast escaping set.
\begin{theorem}\label{fe11}
Let $ S $ be a transcendental semigroup. Then
$A(S) \subset A(f)$ for all $f \in S$  and hence  $A(S)\subset \bigcap_{f\in S}A(f)$. 
\end{theorem}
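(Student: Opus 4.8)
The plan is to read off the inclusion directly from the characterization \ref{fe5} of $A(S)$, specializing the universal quantifier over the semigroup to a single function. First I would take an arbitrary point $z \in A(S)$ and apply the description in \ref{fe5}: there is some $L \in \mathbb{N}$ for which $|g^{n+L}(z)| \geq M^n(R)$ holds simultaneously for every $g \in S$ and every $n \in \mathbb{N}$, where $M(r) = \max_{|w|=r}|g(w)|$ is the maximum modulus function of the function being quantified.

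Next, fixing any single $f \in S$, I would simply instantiate this universal statement at $g = f$. This yields an $L \in \mathbb{N}$ with $|f^{n+L}(z)| \geq M^n(R)$ for all $n \in \mathbb{N}$, where now $M$ is the maximum modulus function of $f$ itself. But this is precisely the defining condition for $z$ to lie in the classical fast escaping set $A(f)$. Hence $z \in A(f)$, and since $z \in A(S)$ was arbitrary, $A(S) \subset A(f)$. As $f \in S$ was also arbitrary, taking the intersection over all $f \in S$ gives $A(S) \subset \bigcap_{f \in S} A(f)$, which is the second assertion.

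The only point that needs care---and the one I expect to be the main (though minor) obstacle---is the bookkeeping of the constant $R$ and the function $M$. In \ref{fe5} the value $R$ is chosen so that the escape inequality holds uniformly across all $g \in S$, whereas the classical set $A(f)$ is defined using a value of $R$ adapted to $f$ alone (any $R$ with $M(r) > r$ for $r \geq R$). To close this gap cleanly I would invoke the fact that $A(f)$ is independent of the choice of admissible $R$ ({\cite[Theorem 2.2 (b)]{rip4}}), so that membership obtained for the semigroup's $R$ transfers to membership in $A(f)$ for any admissible value. With this observation the specialization argument is complete and the inclusion follows without further computation.
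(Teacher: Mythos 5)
Your proposal is correct and is essentially the paper's own argument: the paper gives no explicit proof, merely asserting the theorem is ``clear from the definition,'' and your specialization of the universal quantifier in \ref{fe5} to a single $f \in S$ is exactly that intended reading. Your extra care about the constant $R$, resolved via the $R$-independence of $A(f)$ from {\cite[Theorem 2.2 (b)]{rip4}}, correctly patches the one point the paper's terse assertion glosses over.
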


We have mentioned several results and examples of transcendental semigroups in \cite{sub1, sub2, sub3, sub7, sub4,  sub5} where escaping set is non-empty. One of important particular result in this regards is a condition for which escaping set of a transcendental semigroup is same as escaping set of its each element. In such case, the fact would be obvious from classical transcendental dynamics that the fast escaping set is also non-empty. In our fourth coming study, we always talk with such a semigroup whose fast escaping set is non-empty. 

\section{Elementary Properties of Fast Escaping Set}
In this section, we check how far basic properties of fast escaping set of classical transcendental dynamics can be generalize to fast escaping set of transcendental semigroup dynamics. In \cite{sub2}, we examined the  contrast between classical complex dynamics and semigroup dynamics  in the invariant features of Fatou set, Julia set and escaping set. In this paper, we see the same type contrast in fast escaping set.  Note that in classical transcendental dynamics, the fast escaping set is completely invariant. 

\begin{dfn}[\textbf{Forward, backward and completely invariant set}]
For a given semigroup $S$, a set $U\subset \mathbb{C}$ is said to be $ S $-\textit{forward invariant}\index{forward ! invariant set} if $f(U)\subset U$ for all $f\in S$. It is said to be $ S $-\textit{backward invariant}\index{backward ! invariant set} if $f^{-1}(U) = \{z \in \mathbb{C}: f(z) \in U \}\subset U$ for all $ f\in S $ and it is called $ S $-\textit{completely invariant}\index{completely invariant ! set} if it is both S-forward and S-backward invariant.
\end{dfn} 

We prove the following elementary results that are important regarding the structure of fast escaping set $ A(S) $. These results may also have  more chances of leading further results concerning the properties and structure of $ A(S) $. Indeed, it shows certain connection and contrast between classical transcendental dynamics and transcendental semigroup dynamics and it is also a nice generalization of classical transcendental dynamics to semigroup dynamics. 
\begin{theorem}{\label{2}}
Let $ S $ is a transcendental semigroup such that $ A(S) \neq \emptyset $. Then the following are hold.
\begin{enumerate}
\item $ A(S) $ is S-forward invariant.
\item $ A(S) $ is independent of $ R $.
\item $ J(S) = \partial A(S) $.
\item $ J(S) \subset \overline{A(S)} $. 
\item $ A(S) \cap J(S) \neq \emptyset $.

\end{enumerate}
\end{theorem}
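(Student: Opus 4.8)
The plan is to follow, level by level, the classical arguments of Bergweiler--Hinkkanen and Rippon--Stallard, exploiting the stratification \ref{fe7} into the closed levels $A_R^{L}(S)$ and the fact that $A(S)$ is assembled from $A_R(S)$ by preimages. I would prove the five items in the order $(1),(2),(4),(3),(5)$, since $(3)$ is cleanest once $(4)$ is available.

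For $(1)$ I would use the formulation \ref{fe5}: if $z\in A(S)$ there is $L\in\mathbb{N}$ with $|f^{n+L}(z)|\ge M^{n}(R)$ for all $f\in S$ and $n\in\mathbb{N}$. For a single $f$ the classical shift is immediate, because $f^{n+L}(f(z))=f^{(n+1)+L}(z)$ and $M^{n+1}(R)\ge M^{n}(R)$, so $f(z)$ again lies in a level of $A(S)$. To obtain $h(A(S))\subseteq A(S)$ for an arbitrary $h\in S$ I would invoke closure under composition, $f^{n+L}\circ h\in S$, and attempt to reabsorb $h$ into the index $L$. For $(2)$ I would argue as in the single-function case: given two admissible radii $R<R'$, the inequality with $R'$ is the stronger one, while for the converse one chooses $k$ with $M^{k}(R)\ge R'$, so that $M^{n+k}(R)\ge M^{n}(R')$ and the extra $k$ iterations are absorbed into the level by replacing $L$ with $L+k$; this shows that $A(S)$ computed with $R$ and with $R'$ coincide.

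For $(4)$ I would first record that $A(S)$ is $S$-backward invariant: if $w\in A(S)$, say $f^{n}(w)\in A_R(S)$, and $h(v)=w$ with $h\in S$, then $(f^{n}\circ h)(v)=f^{n}(w)\in A_R(S)$ and $f^{n}\circ h\in S$, so $v\in(f^{n}\circ h)^{-1}(A_R(S))\subseteq A(S)$. Hence $\overline{A(S)}$ is a closed $S$-backward invariant set; choosing a non-exceptional point of the infinite set $A(S)$ and using that $J(S)$ is the closure of its backward orbit under $S$ (the semigroup analogue of the blowing-up property), I get $J(S)\subseteq\overline{A(S)}$. For $(3)$ I would combine $(4)$ with two normality facts. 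First, $\mathrm{int}\,A(S)\subseteq F(S)$: on a neighbourhood contained in $A(S)$ every $f\in S$ diverges locally uniformly to $\infty$, so $S$ is normal there; thus $J(S)\cap\mathrm{int}\,A(S)=\emptyset$ and, with $(4)$, $J(S)\subseteq\overline{A(S)}\setminus\mathrm{int}\,A(S)=\partial A(S)$. Secondly, $A(S)\cap F(S)$ is open, because on a Fatou component normality lets the fast-escaping inequality propagate to a whole neighbourhood; hence no boundary point of $A(S)$ can lie in $F(S)$, i.e.\ $\partial A(S)\subseteq J(S)$, and equality follows. Finally $(5)$: the zeroth level $A_R(S)=\bigcap_{f\in S}\bigcap_{n}\{z:|f^{n}(z)|\ge M^{n}(R)\}$ is closed and nonempty; if it misses $F(S)$ then $A_R(S)\subseteq J(S)$ and we are done, while otherwise a Fatou component $U$ meets $A_R(S)$, lies in it, and closedness gives $\emptyset\neq\partial U\subseteq A_R(S)\cap J(S)\subseteq A(S)\cap J(S)$.

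The main obstacle is $(1)$. Unlike the single-function case, for $h\neq f$ the quantity $f^{n+L}(h(z))$ cannot be rewritten as a shifted iterate of one element of $S$, so the classical shift does not close up and forward invariance genuinely uses the algebraic structure; I expect it to go through for abelian $S$ but to require care in general, and the same non-commutativity, together with the ambiguity of $M$ when different generators have different maximum-modulus functions, is what threatens the uniform choice of $k$ in $(2)$ and the uniform level on a neighbourhood in $(3)$.
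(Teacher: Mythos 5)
Your proposal has two genuine gaps, and they are fatal to the scheme as a whole. The first you name yourself: item (1) is never actually proved. Your shift argument works only when the outer map is an iterate of the same $f$, and the plan to ``reabsorb $h$ into the index $L$'' cannot close up: $f^{n+L}\circ h$ is an element of $S$ but not an iterate $g^{m}$ of a single element, while the defining condition \ref{fe5} quantifies over all iterates $f^{n}$ with one uniform $L$, so nothing is verified at $h(z)$. The paper proves (1) by a different mechanism, at the level of the stratification: from \ref{fe6} it deduces $f(A_R^L(S))\subset A_R^{L+1}(S)\subset A_R^{L}(S)$ for every $f\in S$ and then takes the expanding union \ref{fe8}. (The non-commutativity obstacle you flag is real, and the paper's two-line verification of the level inclusion quietly elides it; but your proposal offers no argument at all for general $S$.) The second gap is the backward-invariance step in your (4). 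Under the paper's definition (\ref{fe3}, clarified by \ref{fe5}), $v\in A(S)$ demands one time $n$ (one label $L$) that works for \emph{all} $f\in S$ simultaneously; your argument produces only the single element $g=f^{n}\circ h\in S$ with $g(v)\in A_R(S)$, which does not place $v$ in $A(S)$. This is not a repairable slip: the theorem deliberately asserts only $S$-forward invariance, since in semigroup dynamics such sets are in general not backward invariant, so the classical ``pull back a non-exceptional point along its backward orbit'' route to $J(S)\subset\overline{A(S)}$ is unavailable. The paper instead obtains (4) as an immediate corollary of (3), and since your (3) in turn is derived from your (4), the failure propagates to both items in your ordering.

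Where your plan overlaps the paper it is sound: your (2) is exactly the paper's proof (choose $m$ with $M^{m}(R)>R_{0}$ and absorb it into the label), and the first half of your (3) --- interior of $A(S)$ inside $F(S)$ by local uniform divergence and Montel --- is the paper's argument; the paper completes (3) by showing the exterior also lies in $F(S)$, using forward invariance of $\mathbb{C}\setminus\overline{A(S)}$, which yields $J(S)\subset\partial A(S)$ (the reverse inclusion $\partial A(S)\subset J(S)$, which you attempt, is in fact not argued in the paper at all). Your justification of that reverse inclusion --- ``normality lets the fast-escaping inequality propagate'' --- is not a proof: what is needed is the semigroup analogue of {\cite[Theorem 1.2]{rip4}}, proved separately in this paper as Theorem \ref{fe12}, and note it gives only $\overline{U}\subset A_{R}^{L-1}(S)$, one label lower. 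The same unproved propagation underlies your (5) (``$U$ meets $A_R(S)$, lies in it''), where additionally $A_R(S)\neq\emptyset$ does not follow from $A(S)\neq\emptyset$: by \ref{fe8} only some label $A_R^{-L}(S)$ is nonempty, though your dichotomy survives if run on such a label together with Theorem \ref{fe12}. The paper's own (5) goes differently, splitting into multiply and simply connected Fatou components and invoking the Rippon--Stallard results for each $f\in S$, Theorem \ref{fe11}, and Poon's $J(S)=\overline{\bigcup_{f\in S}J(f)}$ {\cite[Theorem 4.2]{poo}}.
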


\begin{proof}

(1). From the definition \ref{fe9} (that is,  from equation \ref{fe6}), we can write 
$$
A_R^L (S) \subset \{z\in \mathbb{C}: |z| \geq M^{L}(R), \; L \geq 0 \}.
$$ 
So for any $ z_{0} \in  A_R^L (S)$, 
$$f(z_{0}) \in\{z\in \mathbb{C}: |f(z)| \geq M^{L+ 1}(R), \; L \geq 0 \} =  A_R^{L+1} (S)
$$ 
for all $ f \in S $ and $ n \in \mathbb{N} $. 
This shows that 
$ f(A_R^L (S)) \subset A_R^{L+1} (S)$ for all $ f \in S $.  
However from   relation \ref{fe7}, 
$
A_R^{L+1} (S) \subset A_R^L (S). 
$
Hence, we have 
$
f(A_R^L (S)) \subset A_R^L (S). 
$
This fact together with equation \ref{fe8}, we can say that $ A(S) $ is S-forward invariant.

(2) Choose $ R_{0} > R $, then from \ref{fe7}, we have
$A^{L}_{R_{0}} (S) \subset A_R^L (S)$ for all $ L \in \mathbb{Z} $ and so 
$$
 \bigcup_{L \in \mathbb{N}}A^{-L}_{R_{0}} (S) \subset\bigcup_{L \in \mathbb{N}}A^{-L}_{R} (S)  
$$
Since there is $ m \in \mathbb{N} $ such that $ M^{m}(R) > R_{0} $ and so
$$
\bigcup_{L \in \mathbb{N}}A^{-L}_{R} (S) \subset \bigcup_{L \in \mathbb{N}}A^{m-L}_{R} (S) = \bigcup_{L \in \mathbb{N}}A^{-L}_{M^{m}(R)} (S) \subset \bigcup_{L \in \mathbb{N}}A^{-L}_{R_{0}} (S) 
$$
From above two inequality, we have 
$$
\bigcup_{L \in \mathbb{N}}A^{-L}_{R_{0}} (S) = \bigcup_{L \in \mathbb{N}}A^{-L}_{R} (S)  = A(S)
$$
This proves $ A(S) $ is independent of $ R $.

(3) We prove this statement by showing $ (A(S))^{0}\subset F(S) $ and $ (A(S))^{e}\subset F(S) $ where $ (A(S))^{0} $ and $ (A(S))^{e} $ are respectively interior and exterior of $ A(S) $. Since $A(S)$ is S-forward invariant, so $ f^{n}(A(S)) \subset A(S) $ for all $ f \in S $ and $ n \in \mathbb{N} $.  
 Suppose $z \in  (A(S))^{0} $, then there is a neighborhood $ V $ of $ z $ such that $ z \in V \subset A(S) $. Since $ A(S) $ contains no periodic points, so $ |f^{n+ L} (z)|\geq M^{n}(R) \;  \text{for all}\;\ f\in S, \;\text{and}\;\ n\in \mathbb{N} $ and hence  $ (f^{n})_{n \in \mathbb{N}} $ is normal on $ V $ by Montel's theorem. Thus $ z \in F(S) $.  This proves $ (A(S))^{0}\subset F(S) $. 
 
By the theorem 3.2.3 of \cite{bea}, the closure and complement of $ A(S) $ are also forward invariant. So 
from $ f^{n}(A(S)) \subset A(S) $, we can write
$$f^{n}(\mathbb{C} -\overline{A(S)})\subset \mathbb{C} -\overline{A(S)}. 
$$ 
for all $ n \in \mathbb{N} $. Since $\mathbb{C} -\overline{A(S)} = (A(S))^{e} $.
By the assumption of non-empty $ A(S) $, $ \overline{A(S)} $ is also a non-empty closed set. By the definition, $ F(S) $ is a largest open set on which $ S $ is normal family, so we must   $$\mathbb{C} -\overline{A(S)} = (A(S))^{e}\subset F(S) $$. 

(4) The proof follows from (3). 

(5) By the theorem \ref{fe11}, $A(S) \subset A(f)$ for all $f \in S$. A Fatou component $ U \subset F(S) $ is also a component of $ F(f) $ for each $ f \in S $. \\
Case (i): If $ U $ is multiply connected component of $ F(S) $, then by {\cite[Theorem 2 (a)]{rip1}} $ \overline{U} \subset A(f) $ for all $ f \in S $. Again by the above same theorem \ref{fe11}, $\overline{U} \subset A(S) $. This shows that $ \partial U \subset A(S) $. Since $ \partial U \subset J(f) $ for all $ f \in S $. By {\cite[Theorem 4.2]{poo}}, we write $ \partial U \subset J(S) $.  This proves $ A(S) \cap J(S) \neq \emptyset $.\\
case(ii): If $ U $ is simply connected component of $ F(S) $ that meets $ A(S) $,  then by {\cite[Theorem 1.2 (b)]{rip4}} $ \overline{U} \subset A(f) $ for all $ f \in S $. So, as in case (i), $ \overline{U} \subset A(S) $. By {\cite[Corollary 4.6]{rip4}}, if  $ F(S) $ has only simply connected components, then $ \partial{A_{R}^{L}} \subset J(S) $ where $ \partial{A_{R}^{L}(S)} $ is L-th label of $ F(S) $. From the equation \ref{fe8}, we conclude that $ A(S) \cap J(S) \neq \emptyset $.
\end{proof}

There are many classes of functions from which we get $ I(f) \subset J(f) $ and for such functions, we must have $ A(f) \subset J(f) $. Dinesh Kumar and Sanjay Kumar {\cite[Theorem 4.5]{kum2}} prove that $ I(S) \subset J(S) $ if transcendental semigroup $ S $ is of finite or bounded type. We prove the following  similar result. 
\begin{theorem}
Let $ S $ be a bounded or finite type transcendental semigroup. Then $ A(S) \subset J(S) $ and $ J(S) = \overline{A(S)} $. 
\end{theorem}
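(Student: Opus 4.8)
The plan is to split the theorem into its two assertions and to observe that, under the finite/bounded-type hypothesis, almost all of the work has already been done by the cited result of Kumar and Kumar together with part (4) of Theorem~\ref{2}. The whole argument is then a short chain of inclusions combined with the fact that the Julia set is closed.

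First I would establish $A(S)\subset J(S)$. Recall from the remarks following Definition~\ref{fe2} that $A(S)\subset I(S)$ for every transcendental semigroup $S$. By hypothesis $S$ is of bounded or finite type, so {\cite[Theorem 4.5]{kum2}} gives $I(S)\subset J(S)$. Concatenating these two inclusions yields
\[
A(S)\subset I(S)\subset J(S),
\]
which is the first claim. No further analysis is needed here; the entire dynamical content is carried by the Kumar--Kumar inclusion $I(S)\subset J(S)$.

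Next I would deduce the identity $J(S)=\overline{A(S)}$ by proving the two inclusions separately. For $\overline{A(S)}\subset J(S)$, I would take closures in the inclusion $A(S)\subset J(S)$ just obtained, using that $J(S)$ is closed (being the complement of the open set $F(S)$): thus $\overline{A(S)}\subset\overline{J(S)}=J(S)$. The reverse inclusion $J(S)\subset\overline{A(S)}$ is exactly the content of Theorem~\ref{2}(4), which holds whenever $A(S)\neq\emptyset$ (and under the present hypotheses $A(S)$ is nonempty, as discussed at the end of Section~2). Combining the two inclusions gives $J(S)=\overline{A(S)}$.

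I do not expect a genuine obstacle in this argument, since the finite/bounded-type hypothesis is precisely what forces escaping points into the Julia set and thereby upgrades the general inclusion $J(S)\subset\overline{A(S)}$ of Theorem~\ref{2}(4) into an equality. The only point requiring mild care is to confirm that $A(S)\neq\emptyset$ so that Theorem~\ref{2}(4) is applicable; this is guaranteed by the standing assumption stated at the end of Section~2 that we work only with semigroups whose fast escaping set is nonempty. Everything else reduces to the closedness of $J(S)$ and the monotonicity of closure under inclusion.
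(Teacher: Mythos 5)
Your proof is correct, and its second half coincides exactly with the paper's: both derive $J(S)=\overline{A(S)}$ by combining $A(S)\subset J(S)$ and the closedness of $J(S)$ with Theorem~\ref{2}(4). For the first inclusion, however, you take a genuinely different route. You stay entirely at the semigroup level, chaining $A(S)\subset I(S)$ with the Kumar--Kumar inclusion $I(S)\subset J(S)$ for finite or bounded type semigroups ({\cite[Theorem 4.5]{kum2}}). The paper instead argues elementwise: for each $f\in S$ it invokes the Eremenko--Lyubich result $I(f)\subset J(f)$ \cite{ere1}, combines it with $A(S)\subset A(f)$ from Theorem~\ref{fe11}, and then uses Poon's formula $J(S)=\overline{\bigcup_{f\in S}J(f)}$ ({\cite[Theorem 4.2]{poo}}) to get the chain $A(S)\subset A(f)\subset J(f)\subset J(S)$. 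Your version is shorter, outsources all of the dynamics to one semigroup-level citation, and quietly sidesteps a point the paper glosses over: applying Eremenko--Lyubich to every $f\in S$ requires each such $f$ (a finite composition of bounded-type generators) to itself be of bounded type, which is true because $SV(f\circ g)\subset SV(f)\cup f(SV(g))$, but is nowhere verified in the paper. What the paper's route buys in exchange is the finer elementwise information $A(S)\subset A(f)\subset J(f)$ for every individual $f\in S$, which does not follow from your chain. Your explicit check that $A(S)\neq\emptyset$ before invoking Theorem~\ref{2}(4) is also slightly more careful than the paper, which leaves that standing assumption implicit.
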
 
\begin{proof}
For each $ f \in S $, Eremenko and Lyubich \cite{ere1} proved that $ I(f) \subset J(f) $.  K. K. Poon {\cite[Theorem 4.2]{poo}}  proved that $ J(S) =\overline {\bigcup_{f \in S} J(f)}$. So for any $ f \in S $, we have $ A(S) \subset A(f) \subset J(f) \subset J(S) $. The second part follows from $ A(S) \subset J(S) $ together with above theorem \ref{2} (4).  
\end{proof}

There are many functions in the class $ \mathscr{B} $, the escaping set $ I(f) $ consists of uncountable family of curves tending to infinity. For example, function  $ \lambda \sin z + \gamma $ with $ \lambda, \gamma \in \mathbb{C} $ belongs to the class $ \mathscr{S} \subset \mathscr{B} $ and its escaping set is an uncountable union of curves tending to infinity, the so-called Cantor bouquet.  For the function $ f(z) = \lambda e^{z}, 0< \lambda < 1/e $, the Fatou set is completely invariant attracting basin and Julia set is a Cantor bouquet consisting of uncountably many disjoint simple curves,  each of which has finite end point and other endpoint is $ \infty $. The escaping set of such a function consists of open curves (without endpoints) together with some of their end points. Note that for such a function, each point in the escaping set can be connected to $ \infty $ by a curve in the escaping set.  For such functions, every point in such a curve  belongs to fast escaping set except possibly a finite endpoint. More generally, let $f$ be a finite composition of functions of finite order in the class $\mathscr{B}$ and let $z_0 \in  I(f)$. Then  $z_0$ can be connected to $ \infty $ by a simple curve $\Gamma \subset I(f)$ such that $\Gamma \setminus \{z_0\} \subset A(f)$ (see for instance {\cite[Theorem 1.2]{rem}}). 

There may a chance of similar result in semigroup dynamics if semigroup $ S $ is generated by the transcendental functions  of finite order in the class $\mathscr{B}$. If so, then every $ f \in S $ is a finite composition of the functions of finite order in the class $\mathscr{B}$ and for each of such function $ f $,  $ A(f) $ consists of curves $ \Gamma \setminus \{z_0\} $ with exception of some of the end points. Since $ A(S) \subset A(f) $ for each $ f \in S $, then $ A(S) $ may consist of curves $ \Gamma \setminus \{z_0\} $ with exception of some of the end points.

\section{On the L-th Labels of $ A(S) $}
In this section, we more concentrate on L-th label $ A_{R}^{L}(S) $ of fast escaping set $ A(S) $. Since fast escaping set can be written as expanding union of L-th labels, so we hope that structure and properties of each L-th label may determine structure and properties of fast escaping set. Again, we will also see  contrast between fast escaping set and and its label if there are. The following result is a contrast. That is, analogous to classical transcendental dynamics \cite{rip4}, unlike the set $ A(S) $, each of its label is a closed set. 

\begin{theorem}{\label{1}}
Let $ L\in\mathbb{Z} $, then for a transcendental semigroup $ S $ such that $ A(S) \neq \emptyset $. Then the set $A_R^L (S)$ is closed and unbounded for each $ L\in \mathbb{Z} $ if it is non-empty. 
\end{theorem}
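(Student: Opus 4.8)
The plan is to treat the two assertions separately: closedness follows from a direct topological argument, while unboundedness is obtained by combining the forward invariance already available from Theorem \ref{2} with the escaping estimate built into the definition of the levels.

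For closedness I would first rewrite the $L$-th level, using Definition \ref{fe9}, as an intersection of closed sets:
$$A_R^L(S) = \bigcap_{f \in S}\ \bigcap_{\substack{n \in \mathbb{N} \\ n+L \geq 0}} \{z \in \mathbb{C} : |f^n(z)| \geq M^{n+L}(R)\},$$
where $M(r) = \max_{|z|=r}|f(z)|$ is the maximum modulus of $f$. For each fixed $f$ and $n$ the iterate $f^n$ is entire, hence continuous, so $z \mapsto |f^n(z)|$ is continuous and the set $\{z : |f^n(z)| \geq M^{n+L}(R)\}$, being the preimage of the closed half-line $[M^{n+L}(R), \infty)$, is closed. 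Since an arbitrary intersection of closed sets is closed, $A_R^L(S)$ is closed; this is routine and works for every $L \in \mathbb{Z}$ (and trivially when the set is empty).

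For unboundedness I assume $A_R^L(S) \neq \emptyset$ and fix $z_0 \in A_R^L(S)$. The idea is to show that the entire forward orbit of $z_0$ under any single $f \in S$ stays inside $A_R^L(S)$ and escapes to infinity. By the forward invariance established in the proof of Theorem \ref{2}(1), namely $f(A_R^L(S)) \subset A_R^{L+1}(S) \subset A_R^L(S)$ for all $f \in S$, the points $f^n(z_0)$ belong to $A_R^L(S)$ for every $n \in \mathbb{N}$. On the other hand, the defining inequality of the level gives $|f^n(z_0)| \geq M^{n+L}(R)$, and since $M(r) > r$ for $r \geq R$ forces $M^{n+L}(R) \to \infty$ as $n \to \infty$, the orbit $\{f^n(z_0)\}_{n \in \mathbb{N}}$ is an unbounded subset of $A_R^L(S)$. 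Hence $A_R^L(S)$ is unbounded.

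The main obstacle is the unboundedness half, and specifically its reliance on forward invariance of the individual level $A_R^L(S)$ rather than of $A(S)$ as a whole. The delicate point is that the inequality defining $A_R^L(S)$ is quantified over all $f \in S$ simultaneously, with the iterates $f^n$ measured against the maximum modulus $M$ of that same $f$; verifying that $f(z_0)$ again satisfies the condition for every $g \in S$ — where $g^n \circ f$ need not itself be an iterate of a single element of $S$ — is exactly the step that requires care, and it is what I would import cleanly from Theorem \ref{2}(1). Once forward invariance of the level is granted, the monotonicity fact $M^{n+L}(R) \to \infty$ makes the conclusion immediate, so no analytic work beyond continuity of the iterates and this escaping estimate is needed.
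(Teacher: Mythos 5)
Your proposal is correct, but it takes a genuinely different route from the paper. The paper proves Theorem \ref{1} by reduction to the single-function theory: it observes that $A_R^L(S) \subset \bigcap_{f \in S} A_R^L(f)$, invokes {\cite[Theorem 1.1]{rip4}} to get that each $A_R^L(f)$ is closed and unbounded with closed and unbounded components, and then asserts that $\bigcap_{f \in S} A_R^L(f)$ is closed and unbounded and that $A_R^L(S)$ is a \emph{component} of this intersection, hence itself closed and unbounded. Your argument is self-contained where the paper's is soft at two points: an arbitrary intersection of closed sets is closed, but an intersection of unbounded sets need not be unbounded, and $A_R^L(S)$ is a priori only a subset, not a component, of $\bigcap_{f \in S} A_R^L(f)$, so the paper's unboundedness step does not actually follow from the cited facts. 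Your closedness argument (writing the level, per Definition \ref{fe9}, as an intersection over $f \in S$ and admissible $n$ of preimages of closed rays under the continuous maps $f^n$) is elementary and airtight, and your unboundedness argument replaces the intersection reasoning by tracking a single forward orbit: granting the level invariance $f(A_R^L(S)) \subset A_R^{L+1}(S) \subset A_R^L(S)$ from the proof of Theorem \ref{2}(1), together with $M^{n+L}(R) \to \infty$, the orbit of any $z_0 \in A_R^L(S)$ under a fixed $f \in S$ is an unbounded subset of the level. The one caveat is the debt you yourself flag: the whole burden of unboundedness now rests on that level invariance, and its justification inside the paper's proof of Theorem \ref{2}(1) is terse — the identification of $\{z : |f(z)| \geq M^{L+1}(R)\}$ with $A_R^{L+1}(S)$ there is not a proof, precisely because $g^n \circ f$ need not be an iterate of a single element of $S$, the difficulty you correctly isolate. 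So your route buys rigor on closedness and repairs the logic of the unboundedness step, at the price of inheriting the paper's still-delicate invariance claim, whereas the paper's route avoids invariance entirely but leans on an unjustified ``component'' identification.
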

\begin{proof}
From the definition \ref{fe9}, we can write $ A_{R}^{L}(S) \subset A_{R}^{L}(f)   $ for all $ f \in S $. This implies that $A_{R}^{L}(S) \subset \bigcap_{f \in S} A_{R}^{L}(f)  $. Since for each $ L \in \mathbb{Z} $, $ A_{R}^{L}(f)  $ is a closed and unbounded set and also by {\cite[Theorem 1.1]{rip4}} each component of $ A_{R}^{L}(f)  $ is closed and unbounded for all $ f\in S $. So,  $\bigcap_{f \in S} A_{R}^{L}(f) $ is also a closed and unbounded set and each of its component is closed and unbounded as well. Since $ A_{R}^{L}(S) $ is a component of $\bigcap_{f \in S} A_{R}^{L}(f) $, so it must be closed and unbounded.
\end{proof}

On the light of this theorem \ref{1} and equation \ref{fe8}, we have obtained a new structure of fast escaping set $ A(S) $, a countable union of closed and unbounded sets $ A_{R}^{L}(S) $. This result also provides a solution of Eremenko's conjecture \ref{ec}
in transcendental semigroup settings. This  generalizes the result of classical transcendental dynamics to transcendental semigroup dynamics. 

Labels of fast escaping set $ A(S) $ can be used to show if $ U $ is a Fatou component in $ A(S) $, then boundary of $ U  $ is also in $ A(S) $. There are variety of results on simply connected and multiply connected Fatou components. Each of the Fatou  component of transcendental semigroup is either a stable (periodic) or unstable (wandering (non- periodic)) domain as defined below.

 \begin{dfn}[\textbf{Stabilizer,  wandering component and  stable domains}]\label{1g}
For a transcendental  semigroup $ S $, let $ U $ be a component of the  Fatou set $ F(S) $ and $ U_{f} $ be a component of Fatou set containing $ f(U) $ for some $ f\in S $.  The set of the form 
$$S_{U} = \{f\in S : U_{f} = U\}  $$
is called \textit{stabilizer} of $ U $ on $ S $. If $ S_{U} $ is non-empty,  we say that a component $ U $ satisfying  $U_{f} = U  $ is called \textit{stable basin} for  $ S $. The component $ U $ of $ F(S) $ is called wandering if the set $ \{U_{f}: f \in S \} $ contains infinitely many elements. That is, $ U $ is a wandering domain if there is sequence of elements $ \{f_{i}\} $ of $ S $ such that $ U_{f_{i}}  \neq U_{f_{j}}$ for $ i \neq j $. Furthermore, the component $ U $ of $ F(S) $ is called strictly wandering if $U_{f} = U_{g} $ implies $ f =g $. A stable basin $ U $ of a transcendental semigroup $ S $ is
\begin{enumerate}
\item  \textit{attracting} if it is a subdomain of attracting basin of each $ f\in S_{U} $
\item  \textit{supper attracting} if it is a subdomain of supper attracting basin of each $ f\in S_{U} $
\item  \textit{parabolic} if it is a subdomain of parabolic basin of each $ f\in S_{U} $
\item  \textit{Siegel} if it is a subdomain of Siegel disk of each $ f\in S_{U} $
\item  \textit{Baker} if it is a subdomain of Baker domain  of each $ f\in S_{U} $

\end{enumerate}
\end{dfn}
 Note that  stabilizer $ S_{U} $ is a a subsemigroup of 
$ S $ ({\cite[Lemma 2.2]{sub4}}). Also, in classical case, a stable basin is one of above type.  For any Fatou component $ U $, we prove the following result which is analogous to {\cite[Theorem 1.2]{rip4}} of classical transcendental dynamics. 
\begin{theorem}\label{fe12}
Let $ U $ be a Fatou component of transcendental semigroup $ S $ that meets $ A_{R}^{L}(S) $, where $ R > 0 $ be such that $ M(r, f) > r $ for $ r \geq R $ for all $ f \in S $ and $ L \in \mathbb{N} $. Then 
\begin{enumerate}
\item $ \overline{U} \subset  A_{R}^{L-1}(S) $;
\item if $ U $ is simply connected, then $ \overline{U} \subset  A_{R}^{L}(S) $
\end{enumerate}
\end{theorem}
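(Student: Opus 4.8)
The plan is to reduce both assertions to the classical level-set result {\cite[Theorem 1.2]{rip4}} for a single transcendental entire function, using that each level of $A(S)$ is the intersection of the corresponding classical levels. The starting point is the identity
\[
A_R^L(S) = \bigcap_{f \in S} A_R^L(f),
\]
which is immediate from Definition \ref{fe9}: by \eqref{fe6}, with $M(r)=M(r,f)=\max_{|z|=r}|f(z)|$, a point $z$ lies in $A_R^L(S)$ precisely when $|f^n(z)| \geq M^{n+L}(R)$ holds for \emph{every} $f \in S$ and every admissible $n$, that is, exactly when $z \in A_R^L(f)$ for every $f \in S$. In particular $A_R^L(S) \subset A_R^L(f)$ for each $f$, as already used in the proof of Theorem \ref{1}.

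For part (1), since $U$ meets $A_R^L(S)$ it meets each $A_R^L(f)$. Because every iterate $f^n$ lies in $S$, normality of $S$ forces $F(S) \subset F(f)$, so $U$ is contained in a Fatou component $V_f$ of the single map $f$, and $V_f$ also meets $A_R^L(f)$. Applying {\cite[Theorem 1.2]{rip4}} to $f$ then gives $\overline{V_f} \subset A_R^{L-1}(f)$, whence $\overline{U} \subset \overline{V_f} \subset A_R^{L-1}(f)$. As $f \in S$ is arbitrary, intersecting over $S$ and invoking the identity above yields $\overline{U} \subset \bigcap_{f \in S} A_R^{L-1}(f) = A_R^{L-1}(S)$, which is the claim. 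For part (2) the intended route is identical: when $U$ is simply connected the stronger conclusion $\overline{U} \subset A_R^L(f)$ of {\cite[Theorem 1.2 (b)]{rip4}} replaces the loss of one level, and intersecting over $f \in S$ gives $\overline{U} \subset A_R^L(S)$.

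The main obstacle sits precisely in the passage from components of $F(S)$ to components of the individual $f$. The inclusion $F(S) \subset F(f)$ only guarantees $U \subset V_f$, and for part (1) this is harmless since $\overline{U} \subset \overline{V_f}$; but for part (2) a simply connected $U$ could in principle lie inside a multiply connected $V_f$, whereas {\cite[Theorem 1.2 (b)]{rip4}} requires simple connectivity of the component $V_f$ to which it is applied. I would therefore either work within the convention already adopted in the proof of Theorem \ref{2}(5), namely that a component $U$ of $F(S)$ is simultaneously a component of each $F(f)$ (so that $U = V_f$ and simple connectivity transfers verbatim), or else replace the reduction in part (2) by a direct maximum-modulus argument on $U$: choosing $z_0 \in U \cap A_R^L(S)$ and estimating $|f^n|$ over $\overline{U}$ by the maximum principle on the simply connected domain $U$, one aims to recover $|f^n(w)| \geq M^{n+L}(R)$ for all $w \in \overline{U}$ and all $f \in S$. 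I expect the reduction route to be the one consistent with the paper's framework, with the simple connectivity transfer in part (2) being the delicate point.
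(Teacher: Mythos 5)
Your proposal is correct and takes essentially the same route as the paper: the paper likewise reduces to Rippon--Stallard's Theorem 1.2 by noting that $U \cap A_R^L(S) \neq \emptyset$ forces $U \cap A_R^L(f) \neq \emptyset$ for every $f \in S$, applies part (a) (resp.\ part (b)) of that theorem to each $f$, and intersects over $f \in S$ to conclude $\overline{U} \subset A_R^{L-1}(S)$ (resp.\ $\overline{U} \subset A_R^{L}(S)$). The delicate point you flag in part (2) is genuine but is one the paper glosses over rather than resolves --- its proof applies \cite[Theorem 1.2]{rip4} to $U$ itself, tacitly using the convention stated in the proof of Theorem \ref{2}(5) that a component of $F(S)$ is a component of each $F(f)$, so your more cautious treatment (passing to the component $V_f \supset U$ of $F(f)$ for part (1), and noting that simple connectivity of $U$ need not transfer to $V_f$ in part (2)) is, if anything, more careful than the published argument.
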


\begin{proof}
Since $ U \cap A_{R}^{L}(S) \neq \emptyset $. The fact $A_{R}^{L}(S)  \subset A_{R}^{L}(f)   $ for all $ f \in S $ implies that $ U \cap A_{R}^{L}(f) \neq \emptyset $ for all $ f \in S $. So, from the theorem {\cite[Theorem 1.2 (a)]{rip4}}, we always have  $ \overline{U} \subset  A_{R}^{L-1}(f) $ for all $ f \in S $. So $ \overline{U} \subset  A_{R}^{L-1}(S) $. The second part also follows similarly by using {\cite[Theorem 1.2 (b)]{rip4}}. 
\end{proof}
By part (2) of above theorem \ref{fe2}, we can  conclude that $ \overline{U} \subset A_{R}^{L}(S) $ for all simply connected component $ U $ of $ F(S) $.  So, if all components of $ F(S) $ are simply  connected, then we must $ \partial A_{R}^{L}(S) \subset J(S) $ and hence interior of $A_{R}^{L}(S)$ is contained in $ F(S) $. This theorem also generalizes the result of classical transcendental dynamics to transcendental semigroup dynamics. That is, whatever Fatou component (simply or multiply connected)  $ U $ of $ F(S) $ intersecting $ A(S) $, there is always $ \overline{U} \subset A(S) $. Again, another question may raise. Such a Fatou component $ U $ is periodic or wandering? Note that in classical transcendental dynamics, such a Fatou component is always wandering ({\cite[Corollary 4.2]{rip4}}). For transcendental semigroup dynamics, such a Fatou component is again wandering domain. For, if $ U \cap A(S) \neq \emptyset $, then $ U \cap A(f) \neq \emptyset $ for all $ f \in S $. In this case $ U $ is wandering domain of each $ f \in S $, so it is  wandering domain of $ S $ as well. 

Whatever discussion we have done above was about a Fatou component intersecting the fast escaping set $ A(S) $ of a transcendental semigroup $ S $. Are there any Fatou components that are obviously known to lie in $ A(S) $? In classical transcendental dynamics, its answer is yes (see for instance {\cite[Theorem 4.4]{rip4}} and {\cite[Theorem 2]{rip1}}). Indeed, in such case, the Fatou component that obviously lie in $ A(S) $ is a  (closure of) multiply connected wandering domain.  Bergweiler constructed an example of transcendental entire function $ f $ for which $ A(f) $ contains simply connected wandering domain({\cite[Theorem 2]{ber2}}). This wandering domain is simply connected bounded one which lie in between multiply connected wandering domains and this one is only known example of non-multiply  connected Fatou component that lie in $ A(f) $. The generalization of above discussion to semigroup dynamics of course possible. For example, if $ U $ is a multiply connected wandering domain of $ F(S) $, then it also multiply connected wandering domains of every $ f \in S $. In this case, $ \overline{U} \subset A(f) $  for all $ f \in S $ ({\cite[Theorem 4.4]{rip4}}). Hence $ \overline{U} \subset A(S) $.

\end{document}